\newtheorem{theo}{Theorem}
\newtheorem{coro}[theo]{Corollary}
\def\R{\mathbb R}
\def\D{\mathbb D}
\def\C{\mathbb C}
\def\K{\mathbb K}
\def\v{\mathbf v}
\def\div{\operatorname{div}}
\title{\sc{Rotation bounds \\for H\"older continuous homeomorphisms\\ with integrable distortion}}
\author{\sc{A. Clop, L. Hitruhin, B. Sengupta}}
\date{}
\begin{document}

\maketitle

\abstract{We obtain sharp rotation bounds for the subclass of homeomorphisms $f:\C\to\C$ of finite distortion which have distortion function in $L^p_{loc}$, $p>1$, and for which a H\"older continuous inverse  is available. The interest in this class is partially motivated by examples arising from fluid mechanics. Our rotation bounds hereby presented improve the existing ones, for which the H\"older continuity is not assumed. We also present examples proving sharpness.} 

\section{Introduction}

We say that $f:\C\to\C$ is a mapping of finite distortion if it belongs to the Sobolev space $f\in W^{1,1}_{loc}(\C; \C)$, its jacobian determinant $\det (Df)=J(\cdot,f)$ is locally integrable, and there exists a measurable function $\K(\cdot, f):\C\to[1,+\infty]$ such that
$$|Df(z)|^2\leq \K(z,f)\cdot J(z,f)$$
at almost every point $z\in\C$. Above, $|Df(z)|$ stands for the operator norm of the differential matrix $Df(z)$ of $f$ at the point $z$. When $\K(\cdot, f)\in L^\infty$ then $f$ is said to be $K$-quasiregular, with $K=\|\K(\cdot, f)\|_\infty$ (or $K$-quasiconformal, if bijective). In the same way quasiregular maps arose as a generalization of holomorphic functions, mappings of finite distortion arose as a generalization of quasiregular maps partially motivated by questions in nonlinear elasticity. The authors address the interested reader to the monograph \cite{IM} for quasiregular maps in the plane, and to \cite{AIM} for a background on mappings of finite distortion.\\
\\
Recently there has been a growing interest in understanding the rotational properties of planar homeomorphisms, see \cite{AIPS, B, H1, H2, H3, H4}. Special attention has been devoted to the spiraling rate of these maps. More precisely, given a homeomorphism $f:\C\to \C$ normalized by $f(0)=0$ and $f(1)=1$, one is interested in the growth of $|\arg(f(r))|$ as $r\to 0$. This growth represents the number of times that the image $f([r,1])$ winds around the origin as $r\to 0$. This quantity has been proven to admit several speeds of growth which depend on the class of maps under study. In this way, it was proven in \cite{AIPS} that if $f$ is $K$-quasiconformal then
\begin{equation}\label{qc}
|\arg(f(r))|\leq \frac12\left(K-\frac1K\right)\,\log\left(\frac1r\right)+c_K,\hspace{1cm}\text{for all }0<r<1.
\end{equation}
In contrast, if the maps under study are homeomorphisms of finite distortion, the situation changes and the order of growth depends on the integrability of the distortion function. Namely, the second named author discovered in \cite{H2} that if $e^{\K(\cdot, f)}\in L^p_{loc}$ for some $p>0$ then  
$$|\arg(f(z))|\leq \frac{c}{p}\,\log^2\left(\frac1{|z|}\right),\hspace{1cm}\text{for small enough }|z|,$$
and moreover this is sharp up to the value of the constant $c>0$. In other words, the transition between boundedness and exponential integrability of $\K(\cdot, f)$ results in a larger power of the logarithmic term. Further optimal results were obtained later on in \cite{H3}, in the case of integrable distortion, that is, when $\K(\cdot, f)\in L^p_{loc}$ for some $p>1$,
\begin{equation}\label{Lpdist}
|\arg(f(z))|\leq\frac{c}{|z|^\frac2p},\hspace{1cm}\text{for small enough }|z|
\end{equation}
or even when $\K(\cdot, f)\in L^1_{loc}$,
\begin{equation}\label{L1dist}
\lim_{|z|\to 0} |z|^2\,|\arg(f(z))|=0.
\end{equation}
The moral here is that more spiraling is allowed at the cost of relaxing the integrability properties of $\K(\cdot, f)$. As explained in \cite{AIPS, H2, H3}, the local rotational properties go hand in hand with the local stretching behavior. Especially important for the argument are the estimates for the modulus of continuity of the inverse map. \\
\\
It turns out mappings of finite distortion also have a role in fluid mechanics. To be precise, let us think of the planar incompressible Euler system of equations in vorticity form,
\begin{equation}\label{euler}
\begin{cases}
\frac{d}{dt}\omega+ (\v \cdot\nabla )\omega = 0\\
\div(\v )=0\\
\omega(0,\cdot)=\omega_0.\end{cases}
\end{equation}
Here $\omega=\omega(t,z):[0,T]\times\C\to\C$ is the unknown, $\omega_0\in L^\infty(\C;\C)$ is given, and $\v $ is the velocity field. The Biot-Savart law,
$$\v =\frac{i}{2\pi \bar{z}}\ast \omega$$
makes more precise the relation between $\v $ and $\omega$. As it is well known, Yudovich \cite{Y} proved existence and uniqueness of a solution $\omega\in L^\infty([0,T]; L^\infty(\C;\C))$ for any given $\omega_0$. In particular, the corresponding velocity field $\v $ belongs to the Zygmund class, and therefore the classical Cauchy-Lipschitz theory guarantees for the ODE
$$
\begin{cases}
\frac{d}{dt} X(t,z)= \v (t, X(t,z))\\
X(0,z)=z
\end{cases}
$$
both existence and uniqueness of a flow map $X:[0,T]\times\C\to\C$. It was proven in \cite{CJ} that, for small enough $t>0$, each of the flow homeomorphisms $X_t=X(t,\cdot):\C\to\C$ is indeed a mapping of finite distortion. Moreover, for each small value $t>0$ there is a number $p(t)>1$ such that the distortion function $\K(\cdot, X_t)$ belongs to $L^p_{loc}$ whenever $p<p(t)$. \\
\\
As mappings with $L^p$ distortion, the mappings $X_t$ are a bit special because both $X_t$ and $X_{t}^{-1}$ are H\"older continuous, as shown in \cite{W}, with a H\"older exponent that decays exponentially in time. This is not true in general, and mappings of $L^p$ distortion need not have a H\"older continuous inverse, as shown in \cite{KT}. Therefore, it is a question of interest to find out if the H\"older nature of the inverse map results in better rotation bounds. Indeed, even though the bounds obtained in \cite{H3} can be applied to $X_t$,  the H\"older continuous nature of $X_t^{-1}$ provides a significant improvement to \eqref{Lpdist}. We describe this improvement in our main Theorem. 
 
\begin{theo}\label{main}
Let $f:\C\to\C$ be a homeomorphism of finite distortion such that $f(0)=0$ and $f(1)=1$, and assume that $\K(\cdot, f)\in L^p_{loc}$ for some $p> 1$. Suppose also that
$$|f(x)-f(y)|\geq C\,|x-y|^\alpha,\hspace{1cm}\text{if }|x-y|\text{ is small,}$$
for some $\alpha>1$. Then 
$$|\arg(f(z))|\leq C\,\sqrt{\alpha}\, |z|^{-\frac1p}\,\log^\frac12\left(\frac1{|z|}\right)$$
whenever $|z|$ is small enough. 
\end{theo}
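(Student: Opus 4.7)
Our plan is to adapt the approach of \cite{H3} by exploiting the extra information coming from the Hölder continuity of $f^{-1}$. The key geometric input is that the Hölder assumption, applied at $y=0$, yields $|f(z)|\ge C|z|^\alpha$ for $|z|$ small; hence the image of the annulus $A(r,1)=\{r<|z|<1\}$ is confined to $\{Cr^\alpha\le |w|\le M\}$, and in particular
$$\int_{f(A(r,1))} \frac{1}{|w|^2}\,dA(w)\;\le\; C\alpha\log\frac{1}{r}.$$
This replaces what would otherwise be a polynomial blow-up in $r$ by a merely logarithmic one, and is the origin of the $\sqrt\alpha$ in the final bound.

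\textbf{Setup.} For every $\phi$ and small $r$, the standard radial estimate gives
$$|\arg f(re^{i\phi})|\;\le\; C + \int_r^1\frac{|Df(se^{i\phi})|}{|f(se^{i\phi})|}\,ds,$$
and Cauchy--Schwarz with respect to $ds/s$ produces
$$\left(\int_r^1\frac{|Df|}{|f|}\,ds\right)^{2}\;\le\; \log\frac{1}{r}\cdot \int_r^1\frac{s\,|Df|^2}{|f|^2}\,ds.$$
Integrating over $\phi$ converts the right-hand side into the area integral $\int_{A(r,1)}|Df|^2/|f|^2\,dA$. The distortion inequality $|Df|^2\le \K\,J$ together with the change of variables $w=f(z)$ then gives
$$\int_{A(r,1)} \frac{|Df|^2}{|f|^2}\,dA\;\le\; \int_{f(A(r,1))}\frac{\K(f^{-1}(w))}{|w|^2}\,dA(w),$$
a weighted integral of the pulled-back distortion over the confined image annulus.

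\textbf{Main estimate and upgrade.} A Hölder splitting of this last integral, with exponents calibrated to $\K\in L^p$ and to the confinement $\{Cr^\alpha\le|w|\le M\}$, is expected to yield a bound of the form $C\alpha\,r^{-2/p}$. Combining with the Cauchy--Schwarz step produces the averaged estimate
$$\int_0^{2\pi}|\arg f(re^{i\phi})|^2\,d\phi\;\lesssim\; \alpha\,r^{-2/p}\log\frac{1}{r},$$
which we then upgrade to a pointwise bound for every $\phi$ using the tangential analogue of the radial estimate together with the Hölder continuity of $f^{-1}$, which directly controls how much $\arg f(re^{i\phi})$ can vary with $\phi$. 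Taking a square root gives the asserted bound.

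\textbf{Main obstacle.} The hardest part is the Hölder splitting in the main estimate: a naive choice of exponents $p$ and $p/(p-1)$ in the target integral, combined with the weight bound $\int_{Cr^\alpha\le|w|\le M}|w|^{-2p/(p-1)}\,dA\simeq r^{-2\alpha(p-1)/(p-1)\cdot 1}$, produces only the weaker rate $r^{-2\alpha/p}$. Recovering the sharp power $r^{-2/p}$ requires squeezing an extra factor of $r^{2(\alpha-1)/p}$ out of the $L^p$-integrability of $\K$, which is where one has to be most careful: likely candidates are a weighted Hölder inequality, a dyadic decomposition of the image ring, or direct exploitation of the interaction between $\K$ and $J$ afforded by the Hölder continuity of $f^{-1}$. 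A secondary issue is the upgrade from averaged to pointwise bounds in $\phi$, which also uses the Hölder hypothesis in an essential way.
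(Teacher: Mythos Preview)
Your approach has a genuine gap that you yourself flag in the ``Main obstacle'' paragraph: after the change of variables the weighted integral lives over the image annulus $\{Cr^\alpha\le|w|\le M\}$, and every straightforward H\"older splitting there produces the power $r^{-2\alpha/p}$ rather than $r^{-2/p}$. The three ``likely candidates'' you list do not close this gap. A dyadic decomposition of the image ring still has the same total modulus; a weighted H\"older inequality needs a weight that you do not control; and exploiting $\K$ versus $J$ via the H\"older continuity of $f^{-1}$ runs into the problem that $\|\K\circ f^{-1}\|_{L^p}=\bigl(\int \K^p\,J\,dA\bigr)^{1/p}$ is not controlled by $\|\K\|_{L^p}$ for maps whose Jacobian is unbounded. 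There is also a secondary gap: the passage from the $L^2$-in-$\phi$ bound to a pointwise bound is only sketched, and since the tangential variation of $\arg f(re^{i\phi})$ is governed by the same quantity $|Df|/|f|$, you would be invoking precisely the estimate you are trying to prove.

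The paper avoids the whole difficulty by decoupling the two ingredients via the modulus inequality $M(f(\Gamma))\le M_{\K}(\Gamma)$. The \emph{source-side} estimate bounds $M_{\K}(\Gamma)$ using an admissible density $\rho_0$ supported on a dyadic chain of balls along $[z_0,1]$; applying H\"older with $\K\in L^p$ on the source annulus (scale $|z_0|$ to $1$, not $|z_0|^\alpha$ to $1$) gives $M_{\K}(\Gamma)\le C\,|z_0|^{-2/p}$ with no $\alpha$ present. The H\"older-from-below hypothesis enters only on the \emph{image side}, in the lower bound for $M(f(\Gamma))$: along each ray one finds $n(z_0)$ disjoint radial crossings in $f(\Gamma)$, and the AM--HM inequality gives $M(f(\Gamma))\ge n(z_0)^2/\log(c_f/r_f)$, where $r_f=\min_{z\in[z_0,1]}|f(z)|\ge C|z_0|^\alpha$ so that $\log(c_f/r_f)\le C\alpha\log(1/|z_0|)$. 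Combining yields $n(z_0)^2\le C\alpha\,|z_0|^{-2/p}\log(1/|z_0|)$. The point is that by keeping the $\K$-integral in the source you never see the scale $r^\alpha$ there; your route, by pushing everything to the target, conflates the two scales and loses the sharp exponent.
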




\noindent
In contrast with \eqref{Lpdist} and \eqref{L1dist}, the existence of a H\"older continuous inverse allows the power term exponent to be halved, although then the logarithmic term needs to be included.  As an application, we can estimate the spiraling rate of $X_t$.\\
\\
As an application, we can estimate the spiraling rate of $X_t$ for small times. The rotational behavior of $X_t$ is nowadays studied a lot. For instance, in the case of $\omega_0$ being \emph{close} to the characteristic function of the unit disk, the article \cite{CHJ} provides bounds for the winding number of most of the trajectories $\{X_{t}(z)\}_{t>0}$ as $t\to\infty$. Here, instead, we do not evaluate the rotational behavior at large times, but look instead at spiraling bounds \emph{in the space variable} for a fixed and small enough time.

\begin{coro}\label{coroeuler}
Given $\omega_0\in L^\infty(\C;\C)$, let $\v$ be the velocity field of Yudovich's solution to \eqref{euler}, and let $X_t$ be its flow. Then there exists a constant $C>0$ such that
$$
\left|\arg\left(\frac{X_t(z)-X_t(0)}{X_t(1)-X_t(0)}\right)\right|\leq 
C\, \log^\frac12\left(\frac1{|z|}\right)\,|z|^{-t\|\omega_0\|_\infty}\,\exp\left(Ct\|\omega_0\|_\infty\right)\,
$$
if both $|z|$ and $t>0$ are small enough. 
\end{coro}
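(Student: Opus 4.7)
The plan is to reduce the corollary to a direct application of Theorem \ref{main}. First, I would pass to the normalized flow
$$\tilde{X}_t(z)=\frac{X_t(z)-X_t(0)}{X_t(1)-X_t(0)},$$
which satisfies $\tilde{X}_t(0)=0$ and $\tilde{X}_t(1)=1$ and, being $X_t$ post-composed with an affine map, is again a homeomorphism of finite distortion with the same distortion function and an inverse of the same H\"older exponent as $X_t^{-1}$. The quantity to be estimated in the corollary is exactly $|\arg \tilde{X}_t(z)|$, so it suffices to check the hypotheses of Theorem \ref{main} for $\tilde{X}_t$.

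Next I would invoke the two ingredients described in the introduction. By \cite{CJ}, for small $t>0$ the distortion $\mathbb{K}(\cdot,X_t)$ belongs to $L^p_{loc}$ for every $p<p(t)$, with the admissible exponent controlled from below by a quantity of order $1/(Ct\|\omega_0\|_\infty)$; this allows the explicit choice
$$\tfrac{1}{p}\;=\;C\,t\,\|\omega_0\|_\infty,\qquad p>1,$$
provided $t\|\omega_0\|_\infty$ is small enough. By the classical Yudovich-type estimates recalled in \cite{W}, both $X_t$ and $X_t^{-1}$ are H\"older continuous with exponent $\beta(t)=\exp(-Ct\|\omega_0\|_\infty)$. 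Unwrapping the H\"older continuity of the inverse into a lower bound for the forward map gives
$$|X_t(x)-X_t(y)|\;\geq\; c\,|x-y|^{\alpha(t)},\qquad \alpha(t)=\exp(Ct\|\omega_0\|_\infty),$$
for $|x-y|$ small; the same holds for $\tilde{X}_t$, up to adjusting the constant.

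With these ingredients verified, Theorem \ref{main} applied to $\tilde{X}_t$ with the exponents $p$ and $\alpha(t)$ above yields
$$|\arg \tilde{X}_t(z)|\;\leq\; C\,\sqrt{\alpha(t)}\,|z|^{-1/p}\,\log^{\frac12}\!\left(\tfrac{1}{|z|}\right)$$
for sufficiently small $|z|$. Substituting $1/p=Ct\|\omega_0\|_\infty$ and $\sqrt{\alpha(t)}=\exp(\tfrac{C}{2}t\|\omega_0\|_\infty)$ and absorbing the numerical constants into a single $C$ produces precisely the bound claimed in the corollary.

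The main obstacle is not analytic but is a matter of bookkeeping: one has to verify from \cite{CJ} that the integrability exponent $p(t)$ can indeed be chosen of the order $1/(Ct\|\omega_0\|_\infty)$, and that the small-$t$ regime used there is compatible with the small-$t$ regime needed for Theorem \ref{main} to apply (namely, $p>1$ and a suitable domain on which the H\"older lower bound holds). Once these quantitative facts from the Euler literature are lined up, the corollary is a clean specialisation of the main theorem, and no further estimate is required.
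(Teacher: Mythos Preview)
Your proposal is correct and follows essentially the same route as the paper: invoke \cite{CJ} for $\K(\cdot,X_t)\in L^p_{loc}$ with $p$ close to $1/(t\|\omega_0\|_\infty)$, invoke \cite{W} for the H\"older exponent $\alpha(t)\le e^{Ct\|\omega_0\|_\infty}$ of the inverse, and apply Theorem~\ref{main}. Your explicit normalization $\tilde X_t$ is a useful clarification that the paper leaves implicit.
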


\noindent
In particular, if $z=\frac1n$, $n=1,2,\dots$ and we fix a time $t_0$ small enough, then the curve $X_{t_0}([\frac1n, 1])$ cannot wind around $X_{t_0}(0)$ more than a multiple of 
$$n^{t_0\|\omega_0\|_\infty}\,(\log n)^\frac12 \,e^{Ct_0\|\omega_0\|_\infty}$$
times. Towards the optimality of Theorem 1, we can show the following.

\begin{theo}\label{main2}
Given an increasing, onto homeomorphism $h:[0,+\infty)\to[0,+\infty)$, and a real number $p>1$, there exists a homeomorphism $\bar{f}:\C\to\C$ with the folllowing properties:
\begin{itemize}
\item $\bar{f}$ is a mapping of finite distortion, with $\K(\cdot, \bar{f})\in L^p_{loc}$.
\item $\bar{f}(0)=0$, $\bar{f}(1)=1$.
\item If $\alpha>\frac{3p}{p-1}$, then $|\bar{f}(x)-\bar{f}(y)|\geq C|x-y|^\alpha$ whenever $|x-y|<1$.
\item There exists a decreasing sequence $\{r_n\}$, with $r_n\to 0+$ as $n\to \infty$, for which
$$|\arg(\bar{f}(r_n))|\geq r_n^{-\frac1p}\,\log^\frac12\left(\frac1{r_n}\right)\,h(r_n).$$
\end{itemize}
\end{theo}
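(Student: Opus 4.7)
The plan is to construct $\bar{f}$ as a radial spiral map of the form $\bar{f}(re^{i\theta}) = g(r)\,e^{i(\theta + \phi(r))}$, concentrating all the rotation on a rapidly spaced sequence of disjoint annuli. For the stretching I would set $g(r) = r^{\alpha_0}$ globally for a suitable exponent $\alpha_0 \ge 1$ to be optimized; this immediately yields the pointwise stretching lower bound $|\bar{f}(z)| = |z|^{\alpha_0} \ge |z|^\alpha$ for $|z|<1$, from which the radial part of the H\"older hypothesis will follow. The annuli $A_n = \{r_n \le |z| \le 2r_n\}$ are indexed along a fast-decreasing sequence $r_n \to 0$, chosen sparse enough for the summability condition below. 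On each gap between consecutive annuli, $\phi$ is locally constant; across $A_n$ it increases by a total $V_n$, which I would choose as the nearest integer multiple of $2\pi$ to $r_n^{-1/p}\log^{1/2}(1/r_n)\,h(r_n)$. The rounding to integer multiples of $2\pi$ will be essential: it makes the argument of $\bar{f}$ agree modulo $2\pi$ on both sides of every annulus and thereby prevents the H\"older inequality from failing at annulus boundaries.

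For the profile of $\phi$ inside $A_n$, the natural choice is the extremizer of $\int_{A_n} \K(\cdot,\bar{f})^p\,dA$ subject to the prescribed total rotation $V_n$. Writing the distortion in polar form gives $\K(\cdot,\bar{f}) \lesssim r^2(\phi')^2/\alpha_0$ in the regime where the rotation dominates, and a Lagrange multiplier argument then yields $\phi'(r) = c_n\, r^{-(2p+1)/(2p-1)}$ with $c_n$ determined by $V_n$. A direct calculation gives $\int_{A_n} \K(\cdot,\bar{f})^p\,dA \lesssim V_n^{2p}\,r_n^2 \sim (\log(1/r_n))^p\,h(r_n)^{2p}$. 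I would then select the sequence $r_n$ so sparse that the corresponding series converges, which is always achievable since $h$ is continuous onto $[0,+\infty)$ with $h(0)=0$; this establishes $\K(\cdot,\bar{f}) \in L^p_{\rm loc}$. The spiraling bound is then immediate: since $\phi(r_n)\ge V_n$ and the $2\pi$-rounding costs at most a factor $1/2$ once $V_n$ is large, one gets $|\arg \bar{f}(r_n)| \ge \tfrac12\, r_n^{-1/p}\log^{1/2}(1/r_n)\,h(r_n)$ along the sequence.

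The hard part will be verifying the H\"older lower bound $|\bar{f}(x) - \bar{f}(y)| \ge C|x-y|^\alpha$ for every pair with $|x-y|<1$. I would split this into three cases. Pairs with both endpoints inside a single gap reduce to the radial power map $z \mapsto e^{i\phi_0}\, z|z|^{\alpha_0 - 1}$, which is H\"older of exponent $\alpha_0 \le \alpha$ by a direct computation. Pairs straddling the boundary of some $A_n$ are controlled precisely because the integer-$2\pi$ choice of $V_n$ makes the angular coordinate continuous modulo $2\pi$ across the boundary. The critical case is when $x,y$ lie in the same $A_n$ with $|r_1 - r_2| \sim \pi/|\phi'(r)|$ and with angles chosen so that the image points are forced to collide: there $|\bar{f}(x) - \bar{f}(y)| \sim g'(r)|r_1 - r_2|$ while $|x-y| \sim r$, and the H\"older inequality translates (using $g'(r) = \alpha_0 r^{\alpha_0 - 1}$ and the peak value $|\phi'(r_n)| \sim V_n/r_n$ of the extremal profile) into the constraint $V_n \lesssim r_n^{\alpha_0 - \alpha}$. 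Balancing this constraint against the prescription $V_n \sim r_n^{-1/p}\log^{1/2}\, h$, optimizing the free parameter $\alpha_0$, and absorbing the regularity losses from cutting off the extremal profile smoothly near $r = r_n$ and $r = 2r_n$, leads to the sufficient condition $\alpha > 3p/(p-1)$ stated in the theorem.
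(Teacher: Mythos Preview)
There is a genuine gap in the integrability step. With the global stretch $g(r)=r^{\alpha_0}$ for a \emph{fixed} $\alpha_0\ge 1$, the distortion on $A_n$ is (as you correctly compute) $\K\sim (r\phi')^2/\alpha_0$, and hence
\[
\int_{A_n}\K(\cdot,\bar f)^p\,dA \;\simeq\; \frac{V_n^{2p}\,r_n^{2}}{\alpha_0^{p}}
\;\simeq\;\frac{1}{\alpha_0^p}\Bigl(\log\tfrac{1}{r_n}\Bigr)^{p}\,h(r_n)^{2p}.
\]
Since $\alpha_0$ is a constant, making the series converge requires $h(r_n)^{2}\log(1/r_n)\to 0$. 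But $h$ is \emph{prescribed}; for instance if $h(r)=\bigl(\log(1/r)\bigr)^{-1/4}$ near $0$ then $h(r)^{2}\log(1/r)=\sqrt{\log(1/r)}\to\infty$, and no choice of the sequence $r_n$ can make the individual terms bounded, let alone summable. Your construction therefore fails to place $\K(\cdot,\bar f)$ in $L^p_{\mathrm{loc}}$ for slowly decaying gauges, and it only reaches rotation of order $r_n^{-1/p}h(r_n)$, i.e.\ the correct power but \emph{not} the sharp $\log^{1/2}$ factor.

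The missing idea is that the stretching exponent must \emph{vary with $n$} and grow. In the paper the building block on $A_n$ is $z\mapsto z\,|z/R_n|^{q_n-1}e^{i\alpha_n\log(|z|/R_n)}$ with $q_n=\log(1/r_n)$ and $\alpha_n=h(r_n)\,r_n^{-1/p}\log^{1/2}(1/r_n)$; the point is that the distortion of such a block is $\K\sim\alpha_n^{2}/q_n$, so the growing $q_n$ exactly cancels the logarithm and one gets $\int_{A_n}\K^p\simeq h(r_n)^{2p}$, which is always summable after thinning $r_n$. The H\"older-from-below bound is then obtained by factoring $\bar f=g\circ f$ into a pure radial stretch $g$ (built from the $q_n$) and a pure rotation $f$ (built from the $\alpha_n$): one shows directly that $g$ is H\"older from below with exponent $3$, while $f$ is bi-H\"older with from-below exponent arbitrarily close to $p/(p-1)$; composing these produces the threshold $\alpha>3p/(p-1)$. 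Your ``critical case'' balancing of $V_n\lesssim r_n^{\alpha_0-\alpha}$ against the prescribed $V_n$ does not lead to this threshold because it is tied to the constant $\alpha_0$, which is precisely the quantity that must be allowed to blow up along the sequence.
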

\noindent
Since $h$ can be chosen to approach $0$ at any speed, Theorem \ref{main2} shows that the order provided in Theorem \ref{main} is sharp.  \\
\\
The paper is structured as follows. In Section 2 we give the basic preliminaries. In Section 3 we prove  Theorem \ref{main} and Corollary \ref{coroeuler}. Finally, we prove Theorem \ref{main2} in Section 4. \\
\\
\textbf{Acknowledgements}.  A.C. and B.S. are partially supported by projects MTM2016-81703-ERC, MTM2016-75390 (Spanish Government) and 2017SGR395 (Catalan Government). L.H. was partially supported by ICMAT Severo Ochoa project SEV-2015-0554 grant MTM2017-85934-C3-2-P, the ERC grant 307179-GFTIPFD, the ERC grant 834728 Quamap, by the Finnish Academy of Science project 13316965 and by a grant from The Emil Aaltonen Foundation.

\section{Preliminaries}

A mapping $f:\C\to\C$ is said to be H\"older continuous, or simply H\"older from above, if there exist constants $C>0$, $d>0$ and $\alpha>0$ such that for any two points $x,y\in\C$ and $\alpha\in\R^{+}\setminus\{0\}$ with $|x-y|<d$ one has
$$|f(x)-f(y)|\leq C|x-y|^\alpha$$
Similarly, we say $f$ is H\"older from below if there are constants $C, \beta>0$ such that for any two points $x,y\in\C$ and $\beta\in\R^{+}\setminus\{0\}$ with $|x-y|<d$ one has
$$|f(x)-f(y)|\geq\bar{C}|x-y|^\beta $$
A mapping $f:\C\to\C$ is called bi-H\"older if it is both H\"older from above and from below.\\
\\ 
Let $f: \mathbb{C} \to \mathbb{C}$ be a mapping of finite distortion and fix a point $z_0 \in \mathbb{C}$. In order to study the pointwise rotation of $f$ at the point $z_0$, one usually fixes an argument $\theta\in[0,2\pi)$, and then looks at how the quantity
 $$\arg (f(z_0+te^{i\theta})-f(z_0))$$ 
changes as the parameter $t$ goes from 1 to a small $r$. This can also be understood as the winding of the path $f\left( [z_0+re^{i\theta}, z_0+e^{i\theta}] \right)$ around the point $f(z_0)$. As we are interested in the maximal pointwise spiraling, we need to normalize and then retain the maximum over all directions $\theta$,
	\begin{equation}\label{spiraling}
	\sup_{\theta \in [0,2\pi)} |\arg (f(z_0+re^{i\theta})-f(z_0)) - \arg (f(z_0+e^{i\theta})-f(z_0))  |.
	\end{equation}
Then, the maximal pointwise rotation is precisely the behavior of the above quantity \eqref{spiraling} when $r\to 0^+$. In this way, we say that the map $f$ \emph{spirals at the point $z_0$ with a rate $g$}, where $g: [0, \infty) \to [0, \infty)$ is a decreasing continuous function, if 
	\begin{equation}\label{Spiral rate}
	\limsup_{r\to 0^+} \frac{\sup_{\theta \in [0,2\pi)} |\arg (f(z_0+re^{i\theta})-f(z_0)) - \arg (f(z_0+e^{i\theta})-f(z_0))  |}{g(r)} = C
	\end{equation}
for some constant $0<C<\infty$. Finding maximal pointwise rotation for a given class of mappings equals finding the maximal spiraling rate for this class. Note that in \eqref{Spiral rate} we must use limit superior as the limit itself might not exist. Furthermore, for a given mapping $f$ there might be many different sequences $r_n \to 0$ along which it has profoundly different rotational behaviour. \\
\\
Our proof of Theorem \ref{main} relies heavily on the modulus of path families. We give here the main definitions, and address the interested reader to \cite{V} for a closer look at the topic. The image of a line segment $I$ under a continuous mapping is called a \emph{path}, and we denote by $\Gamma$  a family of paths. Given a path family $\Gamma$, we say that a Borel measurable function $\rho$ is \emph{admissible for $\Gamma$} if any rectifiable $\gamma \in \Gamma$ satisfies
	\begin{equation*}
	\int_{\gamma} \rho(z)dz \geq 1.
	\end{equation*}
The \emph{modulus of the path family $\Gamma$} is defined by 
	\begin{equation*}
	M(\Gamma)= \inf_{\rho \text{ admissible}} \int_{\mathbb{C}} \rho^{2}(z)\,dA(z),
	\end{equation*}
where $dA(z)$ denotes the Lebesgue measure on $\C$. As an intuitive rule, the modulus is big if the family $\Gamma$ has \emph{lots} of short paths, and it is small if the paths are long and there are \emph{not many} of them. \newline \newline 
	We will also need a weighted version of the modulus. Any measurable, locally integrable function $\omega: \mathbb{C} \to [0, \infty)$ will be called \emph{a weight function}. In our case, $\omega$ will always be the distortion function $\K(\cdot, f)$ of some map $f$. Then, we define the weighted modulus $M_{\omega}(\Gamma)$ by 
	\begin{equation*}
	M_{\omega}(\Gamma)= \inf_{\rho \text{ admissible}} \int_{\mathbb{C}} \rho^{2}(z)\, \omega(z)\,dA(z).
	\end{equation*}
	Finally, we need the modulus inequality
	\begin{equation}\label{Modeq}
	M(f(\Gamma)) \leq M_{\K(\cdot, f)}(\Gamma)
	\end{equation}
	which holds for any mapping $f$ of finite distortion for which the distortion $\K(\cdot, f)$ is locally integrable, proven by the second named author in \cite{H3}. 

\section{Spiraling bounds}
We will write Theorem \ref{main} in the following, clearly equivalent, form.
\begin{theo}
Let $f$ be a homeomorphism of finite distortion with distortion $\K(\cdot, f) \in L^{p}(\C)$, $p>1$, normalized by $f(0)=0$ and $f(1)=1$. Assume that it satisfies the following condition,
$$\aligned
|f(x)-f(y)|\geq C|x-y|^{\alpha }
\endaligned$$ whenever $|x-y|$ is small.  Then the winding number $n(z_0)$ of the image of the line segment $\left[z_0, \frac{z_0}{|z_0|}\right]$ around the image of the origin is bounded from above by
$$\aligned
n(z_0)&\leq C\sqrt{\alpha}\, |z_0|^{-\frac{1}{p}}\,\log^\frac12\left(\frac{1}{|z_0|}\right) \\
\endaligned$$
\end{theo}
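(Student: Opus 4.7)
The plan is to apply the modulus of path families inequality \eqref{Modeq} to a family whose image in the target has a conformal modulus that grows \emph{quadratically} in the winding number $n$. This quadratic behavior, arising from the slanted-parallelogram structure of the cut target annulus in the universal cover, is exactly what produces the $r^{-1/p}$ power (in contrast with $r^{-2/p}$ in \eqref{Lpdist}) after taking square roots. The Hölder hypothesis enters only through the bound $|f(z_0)| \geq Cr^\alpha$, which controls the conformal dimensions of the target annulus.

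Write $r = |z_0|$ and $n = n(z_0)$. By the Hölder-from-below hypothesis, $\rho_0 := |f(z_0)| \geq Cr^\alpha$. Pick a constant $R$ so that the image curve $\sigma := f([z_0, z_0/|z_0|])$ connects the two boundary circles of the target annulus $A(\rho_0, R)$ while winding $n$ times around the origin. The cut region $A(\rho_0, R) \setminus \sigma$ is simply connected and, via the conformal universal cover of the annulus, lifts to a slanted parallelogram of horizontal width $W = \log(R/\rho_0)$, vertical height $H = 2\pi$, and slope $m = 2\pi n/\log(R/\rho_0)$. I will compute the conformal modulus of the family $\widetilde\Gamma$ of curves joining the two slanted sides of this parallelogram (corresponding to the two sides of the spiral $\sigma$). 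The perpendicular distance between the slanted sides is $H/\sqrt{1+m^2}$, so the constant density $\sqrt{1+m^2}/H$ is admissible and gives the upper bound $M(\widetilde\Gamma) \leq W(1+m^2)/H$; a matching lower bound follows from Cauchy-Schwarz applied to the foliation by perpendicular crossings, yielding the equality $M(\widetilde\Gamma) = W(1+m^2)/H$, which for large $n$ reads $M(\widetilde\Gamma) \gtrsim n^2/\log(R/\rho_0)$.

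Now I apply \eqref{Modeq}. Let $\Gamma = f^{-1}(\widetilde\Gamma)$, the source family of curves that go from one side of the radial segment $[z_0, z_0/|z_0|]$ to the other and therefore wind once around the origin within the source annulus. By the modulus inequality and Hölder's inequality in $L^p$,
$$
M(\widetilde\Gamma) = M(f(\Gamma)) \leq M_{\K}(\Gamma) \leq \|\K(\cdot, f)\|_{L^p}\cdot M_{2p'}(\Gamma),
$$
where $p' = p/(p-1)$ and $M_{2p'}(\Gamma) = \inf_{\rho\text{ admissible}}\|\rho\|_{2p'}^2$. The angular density $\rho(z) = (2\pi|z|)^{-1}$ on the source annulus $A(r,1)$ is admissible for $\Gamma$ (any path going from one side of the cut to the other has $\int|d\theta|\geq 2\pi$), and a direct polar-coordinates calculation gives $M_{2p'}(\Gamma) \leq C_p\,r^{-2/p}$. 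Using $\log(R/\rho_0) \asymp \alpha\log(1/r)$ and combining the three estimates yields $n^2 \lesssim \alpha\log(1/r)\cdot r^{-2/p}$, which rearranges to the claimed bound $n \lesssim \sqrt{\alpha}\,r^{-1/p}\,\log^{1/2}(1/r)$.

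The main technical point will be rigorously justifying the quadratic lower bound on $M(\widetilde\Gamma)$; this is the key step for the improvement over \eqref{Lpdist}, because without Hölder-from-below one cannot control $\log(R/\rho_0)$ and the quadratic savings in $n$ is erased. A minor technical issue is choosing $R$ small enough that $f^{-1}(A(\rho_0, R))$ is contained in a bounded neighborhood of the origin, so that the angular density on $A(r, 1)$ indeed dominates the admissibility integral for every curve in $\Gamma$.
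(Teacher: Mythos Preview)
Your overall strategy---apply \eqref{Modeq}, extract a quadratic-in-$n$ lower bound for the image-side modulus, bound the weighted source-side modulus by $C_p\,r^{-2/p}$ via H\"older, and invoke the H\"older-from-below hypothesis only to control $\log(R/\rho_0)\lesssim\alpha\log(1/r)$---matches the paper's exactly and leads to the same final inequality $n^2\lesssim\alpha\,r^{-2/p}\log(1/r)$.

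Where you diverge is in the choice of path family, and that choice introduces a real gap. You take $\widetilde\Gamma$ to be curves joining the two sides of the spiral $\sigma=f([z_0,z_0/|z_0|])$ in a cut target annulus and assert that the lift to the strip is a \emph{slanted parallelogram} with $M(\widetilde\Gamma)=W(1+m^2)/H$. That equality holds only when $\sigma$ is a perfect logarithmic spiral; for an arbitrary homeomorphism of finite distortion the image curve $\sigma$ is uncontrolled, its lift is not straight, and the ``foliation by perpendicular crossings'' does not exist. The inequality $M(\widetilde\Gamma)\gtrsim n^2/W$ you want is in fact true, but it needs a genuine argument (e.g.\ a length--area bound on the conjugate family, using that any arc from the inner to the outer circle that avoids $\sigma$ must itself wind $n$ times). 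A secondary issue: since you pull back $\Gamma=f^{-1}(\widetilde\Gamma)$, the curves live in $f^{-1}(A(\rho_0,R))$, and without H\"older continuity of $f$ from above you cannot guarantee this preimage stays outside $B(0,r)$, so the admissibility of $(2\pi|z|)^{-1}\chi_{A(r,1)}$ for $\Gamma$ is not justified.

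The paper sidesteps both problems by choosing a different family: $\Gamma$ consists of all paths joining $E=[z_0,1]$ to $F=(-\infty,0]$. On the source side an admissible density is built from dyadic balls along $E$ and H\"older's inequality gives $M_{\K}(\Gamma)\le c_{f,p}\,|z_0|^{-2/p}$ directly. On the image side the auxiliary continuum $f(F)$, running from $0$ to $\infty$, guarantees that along every ray $L_\theta$ one can extract $n(z_0)$ disjoint radial segments belonging to $f(\Gamma)$; the AM--HM inequality applied to $\log(y_j/x_j)$ then yields $\int_0^\infty\rho^2(r,\theta)\,r\,dr\ge n(z_0)^2/\log(c_f/r_f)$ for every admissible $\rho$. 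This argument uses no geometric regularity of $\sigma$ whatsoever, which is why it succeeds where the parallelogram picture fails.
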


\begin{proof}
We would like to prove this theorem using the modulus inequality for homeomorphisms of finite distortion \eqref{Modeq} following the presentation in \cite{H3}. At first, we would like to estimate the modulus term $M_{\K(\cdot, f)}(\Gamma)$ from above. To this end,
let us choose an arbitrary point $z_0\in\C\setminus\{0\}$ such that $|z_0|<1$. Without loss of generality, we may assume that $z_0$ lies on the positive side of the real axis. Next, let us fix the line segments $E=[z_0,1]$ and $F=(-\infty,0]$, and $\Gamma$ be the family of paths connecting a point in $E$ to a point in $F$. Also, let us fix balls $B_j=B(2^{j}z_0,2^{j}z_0)$, $j\in\{0,1,...,n\}$ and $n$ is the smallest integer such that $2^{n}z_0\geq1$. Define
$$\aligned
\rho_{0}(z)=
\begin{cases}
\frac{2}{r(B_0)}\hspace{0.2cm}\mbox{if}\hspace{0.2cm}z\in B_0\\
\frac{2}{r(B_1)}\hspace{0.2cm}\mbox{if}\hspace{0.2cm}z\in B_1\setminus B_0\\
\vdots\\
\frac{2}{r(B_{n})}\hspace{0.2cm}\mbox{if}\hspace{0.2cm}z\in B_n\setminus B_{n-1}\\
0\hspace{0.6cm}\mbox{otherwise}
\end{cases}
\endaligned$$ Note that any $z\in E$ belongs to some ball $\frac{1}{2}B_j$ and that $\rho_{0}(z)\geq\frac{2}{r(B_j)}$, whenever $z\in B_j$. This implies, since $B_j\cap F=\emptyset$ for every $j$, that $\rho_0(z)$ is admissible with respect to $\Gamma$. Hence we can estimate the modulus from above by 
$$\aligned
M_{\K(\cdot, f)}(\Gamma)&=\inf_{\rho \text{ admissible}}\int_{\C}\K(\cdot, f)\rho^{2}(z)\,dA(z)\\
                 &\leq\int_{\C}\K(\cdot, f)\rho_{0}^{2}(z)\,dA(z)\\
                 &\leq\|\K(\cdot, f)\|_{L^{p}\left(B(0,4)\right)}\left(\int_{B(0,4)}\rho_{0}^\frac{2p}{p-1}(z)\,dA(z)\right)^\frac{p-1}{p}\\
                 &\leq c_{f,p}\left(\int_{B(0,4)}\rho_{0}^\frac{2p}{p-1}(z)\,dA(z)\right)^\frac{p-1}{p}\\
\endaligned$$ Let us now estimate the integral term by using the definition of $\rho_0$.
$$\aligned
\int_{B(0,4)}\rho_{0}^{\frac{2p}{p-1}}(z)\,dA(z)&\leq\sum_{j=0}^{n}\int_{B_j}\left(\frac{2}{r(B_j)}\right)^\frac{2p}{p-1}\,dA(z)\\
                                           &=\sum_{j=0}^{n}|B_j|\left(\frac{2}{r(B_j)}\right)^\frac{2p}{p-1}\\
                                           &=c_{p}\sum_{j=0}^{n}\frac{\left(r(B_j)\right)^2}{\left(r(B_j)\right)^\frac{2p}{p-1}}\\
                                           &=c_p\sum_{j=0}^{n}\frac{1}{z_{0}^{\frac{2}{p-1}}}\frac{1}{2^{\frac{2j}{p-1}}}\\
                                           &=c_{p}z_{0}^{-\frac{2}{p-1}}\sum_{j=0}^{n}\frac{1}{2^{\frac{2j}{p-1}}}\\
\endaligned$$ The series $\sum_{j=0}^{n}\frac{1}{2^{\frac{2j}{p-1}}}$ converges to a constant depending on p for any fixed $p>1$. Therefore, 
$$\aligned
M_{\K(\cdot, f)}(\Gamma)\leq c_{f,p}z_{0}^{-\frac{2}{p}}
\endaligned$$
Next, we would like to estimate the modulus term $M\left(f(\Gamma)\right)$ from below. Let us start with the definition of $M\left(f(\Gamma)\right)$ in polar coordinates
$$\aligned
M\left(f(\Gamma)\right)&=\inf_{\rho \text{ admissible}}\int_{\C}\rho^{2}(z)\,dA(z)\\
                       &=\inf_{\rho \text{ admissible}}\int_{0}^{2\pi}\int_{0}^{\infty}\rho^{2}(r,\theta)r\hspace{0.1cm}drd\theta\\
\endaligned$$ and provide a lower bound for 
$$\aligned
\int_{0}^{\infty}\rho^{2}(r,\theta)rdr
\endaligned$$ for an arbitrary direction $\theta\in[0,2\pi)$ and an arbitrary admissible $\rho$.
To this end, we fix a direction $\theta$ and consider the half line $L_\theta$ starting from the origin in the direction $\theta$.
Let us choose points $z_0 \leq t_2< t_0\leq 1$ so that the image set $f(E)$ winds once around the origin when $z$ moves from a point $t_0$ to a point $t_2$ along $E$ and $f(t_0)\in L_\theta$. Since the mapping $f$ is a homeomorphism and the path $f(F)$ contains both the origin and points with big modulus, as $F$ is unbounded, the path $f(F)$ must intersect the line segment $(f(t_2),f(t_0))$ at least once, say at a point $f(t_1)$, with $t_1\in F$. We can choose $t_1$ and $t_0$ such that there are no points from
the paths $f(E)$ and $f(F)$ in the line segment $(f(t_1), f(t_0))$, which thus belongs to the path family $f(\Gamma)$.
Since the path $f(E)$ cycles around the origin $n(z_0)=\left \lfloor{\frac{|\arg\left(f(z_0)\right)-\arg\left(f(1)\right)|}{2\pi}}\right \rfloor
$ times we can find at least
$$\aligned
n(z_0)=\left \lfloor{\frac{|\arg\left(f(z_0)\right)-\arg\left(f(1)\right)|}{2\pi}}\right \rfloor-1
\endaligned$$ 
such disjoint line segments belonging to the path family $f(\Gamma)$. Note that $n(z_0)$ does not depend on the direction $\theta$. Since we are interested in extremal rotation, it can be assumed that $f(E)$ winds around the origin at least once, which makes it clear that $n(z_0)$ is non-negative. Now, the $n(z_0)$ disjoint line segments can be written in the form $\left(x_{j}e^{i\theta},y_{j}e^{i\theta}\right)\subset L_\theta$, where $j\in\{1,2,...,n(z_0)\}$ and $x_j$,$y_j$ are positive real numbers satisfying 
$$\aligned
0<r_f\leq x_1<y_1<...<x_{n(z_0)}<y_{n(z_0)}\leq c_f
\endaligned$$ where $c_f=\sup_{z\in E}|f(z)|$ and $r_f=\min_{z\in E}|f(z)|$. Here, neither $c_f$ nor $r_f$ depends on $\theta$ or $z_0$. So, one could write
$$\aligned
\int_{0}^{\infty}\rho^{2}(r,\theta)rdr\geq\sum_{j=1}^{n(z_0)}\int_{x_j}^{y_j}\rho^{2}(r,\theta)rdr.
\endaligned$$
Next, let us consider the H\"older inequality with the functions $f(r)=\rho\sqrt{r}$ and $g(r)=\frac{1}{\sqrt{r}}$, which after squaring both sides gives
$$\aligned
\int_{x_j}^{y_j}\rho^{2}(r,\theta)rdr\geq\left(\int_{x_j}^{y_j}\rho(r,\theta)dr\right)^{2}\left(\int_{x_j}^{y_j}\frac{1}{r}dr\right)^{-1}\geq\frac{1}{\log\left(\frac{y_j}{x_j}\right)}.
\endaligned$$ The last inequality holds true as $\rho$ is admissible with respect to $f(\Gamma)$ where the line segments $(x_{j}e^{i\theta},y_{j}e^{i\theta})$ belong to the path family $f(\Gamma)$.
Therefore,
$$\aligned
\int_{0}^{\infty}\rho^{2}(r,\theta)rdr\geq\sum_{j=1}^{n(z_0)}\frac{1}{\log\left(\frac{y_j}{x_j}\right)}.
\endaligned$$ It is quite clear from the definition of $c_f$ that
$$\aligned
\sum_{j=1}^{n(z_0)}\frac{1}{\log\left(\frac{y_j}{x_j}\right)}\geq\sum_{j=1}^{n(z_0)-1}\frac{1}{\log\left(\frac{x_{j+1}}{x_j}\right)}+\frac{1}{\log\left(\frac{c_f}{x_{n(z_0)}}\right)}.
\endaligned$$
Next, let us consider the AM-HM inequality. For every positive real number $a_j$,
$$\aligned
\sum_{j=1}^{n}a_j\geq\frac{n^2}{\sum_{j=1}^{n}\frac{1}{a_j}}.
\endaligned$$
At this point, we would like to use AM-HM with the precise choices 
$$a_j=\frac{1}{\log\left(\frac{x_{j+1}}{x_j}\right)}\hspace{.5cm}\text{ if }j\in\{1,2,...,n(z_0)-1\},\text{  and }\hspace{1cm}a_{n(z_0)}=\frac{1}{\log\left(\frac{c_f}{x_{n(z_0)}}\right)},$$ 
which give 
$$\aligned
\sum_{j=1}^{n(z_0)}\frac{1}{\log\left(\frac{y_j}{x_j}\right)}\geq\frac{n^{2}(z_0)}{\log\left(\frac{c_f}{x_1}\right)}\geq\frac{n^{2}(z_0)}{\log\left(\frac{c_f}{r_f}\right)}.
\endaligned$$
Therefore,
$$\int_{0}^{\infty}\rho^{2}(r,\theta)rdr\geq\frac{n^{2}(z_0)}{\log\left(\frac{c_f}{r_f}\right)}.$$
The constant $c_f$ can be defined as $\max_{z\in\D}|f(z)|$, which is finite and does not depend on either $\theta$ or $z_0$, and thus it is irrelevant at the limit $z_0\to0$. On the other hand,  the constant $r_f$ must be estimated using the H\"older modulus of continuity assumption on our mapping $f$, that is
$$|f(z_0)|\geq C|z_0|^{\alpha}$$ 
for sufficiently small $z_0$. This combined with the estimate above gives that 
$$M\left(f(\Gamma)\right)\geq\frac{n^{2}(z_0)}{C\alpha\log\left(\frac{1}{|z_0|}\right)}$$ 
Now, using the modulus inequality we get
$$ \frac{n^{2}(z_0)}{C\alpha\log\left(\frac{1}{|z_0|}\right)}\leq c_{f,p}\left(\frac{1}{|z_0|}\right)^\frac{2}{p}$$ 
which implies the desired estimate. 
\end{proof}



\begin{proof}[Proof of Corollary \ref{coroeuler}]
Corollary \ref{coroeuler} follows immediately after noting that one can take $f=X_t$ in Theorem \ref{main}. Indeed,  from \cite[Corollary 3]{CJ}  we know that $X_t$ belongs to $W^{1,p}$ for any $p<\frac{2}{t\|\omega_0\|_\infty}$, provided that $0<t<\frac{2}{\|\omega_0\|_\infty}$. Since $J(\cdot, X_t)=1$ due to the incompressibility, it then follows $X_t$ is a homeomorphism with finite distortion, and moreover $\K(\cdot, X_t)\in L^{p}_{\text{loc}}$ for $p<\frac{1}{t\|\omega_0\|_\infty}$. Especially, if $t$ is so small that $0<t<\frac{1}{\|\omega_0\|_\infty}$ then one may take $p>1$. Also, we recall from \cite{W} (see also \cite{BC}) that $X_t^{-1}$ is $\alpha$-H\"older continuous with some exponent $\alpha\geq e^{-ct\,\|\omega_0\|_\infty}$ for some $c>0$. Hence, Theorem \ref{main} applies to $f=X_t$ and the claim follows. 
\end{proof}

\section{Optimality of spiraling}

\noindent
We will get  Theorem \ref{main2} in two steps. In the first step, we will construct a map which \emph{only rotates}. This map will already give us the optimal result (in the power scale). In the second step, we will strengthen this up with a second map, that \emph{both rotates and stretches}. This second map is going to be the optimal one.\\
\\
Given an arbitrary annulus $A=B(0,R)\setminus B(0,r)$ we define the corresponding rotation map as 
$$\aligned
\phi_{A}(z)=
\begin{cases}
z&|z|>R\\
z \,e^{ i\alpha\log\left|\frac{z}{R}\right| }&r\leq |z|\leq R\\
z\,e^{i\alpha \,\log\frac{r}{R} }&|z|<r
\end{cases}
\endaligned$$ 
Here $0<r<R$, and $\alpha \in\R$. One must note that $\phi_{A}:\C\to\C$ is bilipschitz (i.e. both $\phi_A$ and its inverse are Lipschitz), hence quasiconformal (its quasiconformality constant depends only on $\alpha$), and moreover  it is conformal outside the annulus $A$. Note also that $\phi_A$ leaves fixed all circles centered at $0$, since $|\phi_A(te^{i\theta})|=t$ for each $t>0$ and $\theta\in\R$. Finally, a direct calculation shows for the jacobian determinant that  $J(z, \phi_A)=1$ for each $z$. \\
\\
Next, we fix a sequence $\{r_n\}$ such that $0<r_{n+1}<\frac{r_n}{2e}$ and $r_1<\frac{1}{e}$. Also, let $R_n=e r_n$. These assumptions make sure that $2r_{n+1}<R_{n+1}<\frac{r_n}2$. Let us now construct disjoint annuli $A_n=B(0,R_n)\setminus B(0,r_n)$, and set $\{f_n\}_n$ to be a sequence of maps, constructed in an iterative way as follows. For $n=1$, we set
$$\aligned
f_{1}(z)=\phi_{A_1}(z)=
\begin{cases}
z&|z|>R_1\\
z \,e^{ i\alpha_1\,\log \frac{|z| }{R_1}}&r_1\leq|z|\leq R_1\\
z\,e^{-i\alpha_1 }&|z|<r_1\\
\end{cases}
\endaligned$$
where $\alpha_1\in\R$, $\alpha_1\geq 1$, is to be determined later. We then define $f_n$ for $n\geq2$ as 
$$\aligned
f_{n}(z)=\phi_{f_{n-1}(A_n)}\circ f_{n-1}(z)
\endaligned$$
again for some values $\alpha_n\in\R$, $\alpha_n\geq 1$, to be determined later. Clearly, each $f_n:\C\to\C$ is quasiconformal, and conformal outside the annuli $A_i$, $i=1,\dots, n$. It is also clear that $f_n(z)=f_{n-1}(z)$ on the unbounded component of $\C\setminus f_{n-1}(A_n)$ (i.e. outside of $B(0, R_n)$). This proves that the sequence $f_n$ is uniformly Cauchy and hence it converges to a map $f$, that is,
$$\aligned
f=\lim_{n\to\infty}f_n
\endaligned$$
which is again a homeomorphism by construction. Now, since $f_n$ is quasiconformal for every $n$ and $f_n(z)=f_{n-1}(z)$ everywhere except inside the ball $B(0,R_n)$, where $R_n\to0$ as $n\to\infty$, the limit map $f$ is absolutely continuous on almost every line parallel to the coordinate axes and differentiable almost everywhere. \\
\\
It is helpful to note that each $f_n$ leaves fixed all circles centered at the origin, so in particular we have $f_n(A_j)=A_j$ for each $j$, and therefore $\phi_{f_{n-1}(A_n)}=\phi_{A_n}$. Direct calculation shows that
$$
|D\phi_{A_n}(z)|=|\partial\phi_{A_n}(z)|+|\overline\partial\phi_{A_n}(z)|=
\begin{cases}
1&|z|>R_n\\
\frac{|2+i\alpha_n|+|\alpha_n|}{2}&r_n\leq |z|\leq R_n\\
1&|z|< r_n\end{cases}
$$
which allows us to estimate that
$$\aligned
|\partial f(z)|+|\bar\partial f(z)|\leq 2\alpha_n\hspace{1cm}\text{whenever }z\in A_n,
\endaligned$$
and $|Df(z)|\leq1$ otherwise. Therefore, in order to have $Df(z)\in L^{1}_{loc}(\C)$ it suffices that
\begin{equation}\label{dfinl1}
\sum_n \alpha_n\,r_n^2<+\infty.
\end{equation}
This, together with the absolute continuity, guarantees $f\in W^{1,1}_{loc}(\C)$. Also, since $f$ is a homeomorphism, we have that $J_{f}(z)\in L^{1}_{loc}(\C)$, and in fact $J(z, f)=1$ at almost every $z\in\C$. Therefore, $f$ is a homeomorphism of finite distortion, with distortion function
$$\aligned
\K(z,f)=\frac{|Df(z)|^2}{J(z, f)}\leq
\begin{cases}
4\alpha_n^2&z\in A_n,\\
1&\text{otherwise.}
\end{cases}
\endaligned$$
Especially, in order to have $\K(\cdot, f)\in L^p_{loc}$, it suffices to ensure the convergence of the series
\begin{equation}\label{kinlp}
\sum_{n=1}^{\infty}|A_n|(4\alpha_n^2)^{p}\simeq \sum_{n=1}^\infty \alpha_n^{2p}\,r_n^2
\end{equation}
which can be done by choosing $\alpha_n$ properly. Note that if \eqref{kinlp} holds, then also \eqref{dfinl1} holds, because our choice of $\alpha_n$ will guarantee $\alpha_n\geq 1$. The last restriction to choose our $\alpha_n$ comes from rotational behavior of $f$. It is clear from the above construction that $f(0)=0$, $f(1)=1$ and
$$
\left|\arg\left(f(r_n)\right)\right|\geq\left|\arg\left(\left(\frac{1}{e}\right)^{1+i\alpha_n}\right)\right|=\alpha_n
$$
for every $r_n$. Since we want our map to be optimal for Theorem \ref{main}, we may be tempted to choose $\alpha_n= r_n^{-1/p}\,\log^{1/2}(1/r_n)$. Unfortunately such a choice does not meet the requirement \eqref{kinlp}. The same problem occurs if we simply choose $\alpha_n=r_n^{-1/p}$. So we choose
$$\alpha_n=h(r_n)\,r_n^{-1/p}.$$
Here $h:[0,\infty)\to[0,\infty)$ is any monotonically decreasing gauge function such that $\lim_{r\to 0^+}h(r)=0$.  With this choice, \eqref{kinlp} is fulfilled if the series
$$\sum_{n=1}^\infty h(r_n)^{2p}<+\infty.$$
But this can always be done by simply reducing the already chosen values of $r_n$, for instance if $h(r_n)<\frac{1}{n^{1/2}}$. Note that this does not provide full optimality for Theorem \ref{main}, but it already gives the right order (in the power scale).\\
\\
We now show that $f$ is H\"older continuous with exponent $1-\frac1p$. For this, let us recall that our map $f$ is a limit of iterates of logarithmic spiral maps inside the annuli $A_n=B(0,R_n)\setminus B(0,r_n)$. In particular, as shown in \cite{AIPS}, if $\gamma\in\R$ then the basic logarithmic spiral map $g(z)=z|z|^{i\alpha}=ze^{i\gamma\log|z|}$ is $L$-bilipschitz, for a constant $L$ such that $|\gamma|=L-\frac{1}{L}$. When $|\gamma|$ is large, $L$ is large as well and so one roughly has $|\gamma|\simeq L$. Since our $f_n$ behaves on the annulus $A_n$ as a spiral map with $|\gamma|=\alpha_n$, we deduce the bilipschitz constant of $f_n$ on $A_n$ is 
$$L \simeq|\gamma| =\alpha_n= h(r_n)\,r_n^{-1/p}.$$ 
Let us now start the proof. To this end, let us consider two arbitrary points $x$ and $y$ in $\D\setminus\{0\}$. We first consider the case where $x, y\in A_n$. In this case, $f(x)=f_n(x)$ and $f(y)=f_n(y)$. Since $r_n>C|x-y|$, we have
$$\aligned
|f(x)-f(y)|=|f_n(x)-f_n(y)| &\lesssim h(r_n)\,r_n^{-1/p}|x-y|\\
           &\leq h(r_n) \left(\frac{C}{|x-y|}\right)^{\frac{1}{p}}|x-y|\\
           &\leq C|x-y|^{1-\frac{1}{p}}
\endaligned$$ 
where we have used the bilipschitz nature of $f_n$ on $A_n$.\\
\\
We now assume that $x, y\in D_n=B(0,r_n)\setminus B(0,R_{n+1})$. On that set $f$ is of the form $ze^{i\beta}$, where $\beta\in\R\setminus\{0\}$, which is clearly an isometry.  \\
\\
Next, we take $x\in A_n$ and $y\in D_n$. In particular, $|x|\geq|y|$. Then let $w$ be any point on the outer boundary of $D_n$ joining $x$ and $y$. We have
$$\aligned
|f(x)-f(y)|&\leq|f(x)-f(w)|+|f(w)-f(y)|\\
           &\leq C|x-w|^{1-\frac{1}{p}}+|w-y|\\
           &\leq2C|x-y|^{1-\frac{1}{p}}
\endaligned$$
The same happens if $x\in D_{n-1}$ and $y\in A_n$. \\
\\
So it just remains to see what happens when $x\in A_n=B(0,R_n)\setminus B(0,r_n)$ and $y\in B(0,R_{n+1})$. Let $L$ be the line joining $x$ and $y$. We divide it into three parts, viz., $L_1$, $L_2$ and $L_3$. $L_1$ connects $x$ to a point $a$ on the inner boundary of $A_n$, so that
$$|f(x)-f(a)|=|f_n(x)-f_n(a)|\leq C|x-a|^{1-\frac1p}$$
Next, $L_2$ connects $a$ to $b$, which is the closest point to $y$ where the line $L$ crosses the inner boundary of $D_n$. From  $2R_{n+1}<r_n<\frac{R_n}2$ we get that $|f(a)|>2|f(b)|$. Also, since $a,b\in D_n$ and $f$ is an isometry there, we get
$$|f(b)-f(y)|\leq2|f(b)|\leq2|f(a)-f(b)|=2|a-b|$$ 
Summarizing
$$\aligned
|f(x)-f(y)|&\leq|f(x)-f(a)|+|f(a)-f(b)|+|f(b)-f(y)|\\ 
           &\leq C|x-a|^{1-\frac{1}{p}}+|a-b|+2|a-b|\\
           &\leq C|x-y|^{1-\frac{1}{p}}
\endaligned$$
The case $x\in D_n$ and $y\in B(0,r_{n+1})$ can be proved in a similar manner. Therefore, we have covered all the possible cases. Since the set $\D\setminus\{0\}$ is partitioned by separated annuli $A_n$ and $D_n$, it is clear that we have proved that $f$ is H\"older continuous with exponent $1-\frac1p$. At this point, it is worth noting that this regularity could also be proven by means of the Sobolev embedding. Indeed, we proved above that $\K(\cdot, f)\in L^p_{loc}$, and also that the Jacobian determinant is constantly $1$. This together implies that $Df\in L^{2p}_{loc}$. \\
\\
Now we show that also $f^{-1}$ is H\"older continuous. Indeed, let us recall that $f$ is the limit of iterates of logarithmic spiral maps inside the annuli and conformal outside. Now, $f^{-1}$ can be constructed using the same building blocks as $f$ itself, just changing the sign of $\alpha_n$ at each step. This is possible because the inverse of a logarithmic spiral map is the same spiral map, just the direction of rotation is opposite of the original map. Since it is clear that the direction of rotation does not play any role in the proof of H\"older continuity of $f$, this implies that $f^{-1}$ is also H\"older from above. Thus $f$ is H\"older from below as well. \\
\\
As we said before, the above example approaches the borderline stated in Theorem \ref{main}, but it does not attain full optimality yet. To this end, we have to modify it by adding to our building blocks a stretching factor. This is done by replacing, at each iterate, the logarithmic spiral map $z|z|^{i\alpha}=ze^{i\alpha\log|z|}$ by a complex power $z|z|^{q+i\alpha}=z|z|^q\,e^{i\alpha\log|z|}$. We now proceed with the details.\\
\\
So, similarly as in the previous construction, we fix a rapidly decreasing sequence $\{r_n\}$ such that $r_{n+1}<\frac{r_n}{2e}$ and $r_1<\frac{1}{e}$. Also, let $R_n=e r_n$. Given an arbitrary annulus $A=B(0,R)\setminus B(0,r)$ we define the corresponding radial stretching combined with rotation map as follows:
\begin{equation}\label{buildingblock}
\aligned
\phi_{A}(z)=
\begin{cases}
z&|z|>R\\
z\,\left|\frac{z}{R}\right|^{q-1}\,e^{i\alpha\log\frac{|z|}{R}}&r\leq|z|\leq R\\
z\left(\frac{r}{R}\right)^{q-1}\,e^{i\alpha\log\frac{r}{R}}&|z|<r\
\end{cases}
\endaligned
\end{equation} 
Note that this time we will have $q\geq 1$. Direct calculation shows that
$$
|\partial\phi_A(z)|+|\overline\partial\phi_A(z)|
=\begin{cases}
1&|z|>R\\
R^{1-q}|z|^{q-1}\frac{|q+1+i\alpha|+|q-1+i\alpha|}{2}&r \leq |z|\leq R \\
R^{1-q}r^{q-1}&|z|<r
\end{cases}$$
and also that
$$
J(z,\phi_A)
=\begin{cases}
1&|z|>R\\
q\,\left(\frac{|z|}{R}\right)^{2(q-1)} &r \leq |z|\leq R \\
\left(\frac{r}{R}\right)^{2(q-1)} &|z|<r
\end{cases}$$
whence
$$
\K(z,\phi_A)
=\begin{cases}
1&|z|>R\\
\frac{(|q+1+i\alpha|+|q-1+i\alpha|)^2}{4q}&r \leq |z|\leq R \\
1&|z|<r
\end{cases}$$
In particular, if $2\leq q+1<\alpha$ then one may estimate $\|\K(\cdot, \phi_A)\|_\infty\leq \frac{4\alpha^2}{q}$. Next, let us construct the sequence of maps $f_n$ in an iterative way as follows. For $n=1$, we set
$$\aligned
f_{1}(z)=\phi_{A_1}(z)=
\begin{cases}
z&|z|<R_1\\
z\,\left|\frac{z}{R_1}\right|^{q_1-1}\,e^{i\alpha_1\log\frac{|z|}{R_1}}&r_1\leq |z|\leq R_1\\
z\left(\frac{1}{e}\right)^{q_1-1}\,e^{-i\alpha_1}&|z|<r_1\\
\end{cases}
\endaligned$$
where $q_1$ and $\alpha_1$ are to be determined later. Next, assuming we have $f_1,\dots, f_{n-1}$, we define $f_n$ for $n\geq2$ as:
$$\aligned
f_{n}(z)=\phi_{f_{n-1}(A_n)}\circ f_{n-1}(z)
\endaligned$$
Note that $\phi_{f_{n-1}(A_n)}$ is determined by the inner and outer radii of $\phi_{f_{n-1}(A_n)}$ (which are already available since $f_1,\dots, f_{n-1}$ are known) as well as for  the parameters $q_n$ and $\alpha_n$, which will be determined later. Clearly, each $f_n:\C\to\C$ is quasiconformal, and conformal outside the annuli $A_i$, $i\in\{1,...,n\}$. Moreover, one can easily show that 
$$ 
\K(\cdot , f_n) = \prod_{j=1}^{n}\K(\cdot, f_{n-j} \circ \phi_{f_{n-j}(A_{n-j+1})})\\
 =\prod_{j=1}^{n}\K( \cdot,\phi_{ A_{n-j+1} }) $$
so that $\K(z, f_n)\leq C\frac{\alpha_j^2}{q_j}$ whenever $z\in A_j$, $j=1\dots n$ while $\K(\cdot,f_n)=1$ otherwise. In a similar way, we can use that $|D\phi_A(z)|\leq C\alpha $ when $z\in A$ (and $|D\phi_A(z)|\leq 1$ at all other points) to obtain that $|Df_n|\leq C \alpha_j$ on $A_j$, $j=1\dots n$, and $|Df_n|\leq 1$ otherwise. \\
\\
By construction, we have $f_n(z)=f_{n-1}(z)$ whenever $z\notin B(0, R_n)$. Thus $\{f_n \}_n$ converges uniformly to a map $\bar{f}(z)$, that is,
$$\aligned
\bar{f}=\lim_{n\to\infty}f_n
\endaligned$$
which is again a homeomorphism by construction. A similar argument to the one before shows that $\bar{f}$ is absolutely continuous on almost every line parallel to the coordinate axis. For almost every fixed $z_0\neq 0$ there is a neighbourhood of $z_0$ such that the sequence $\{f_n(z)\}_n$ remains constant for $n$ very large and $z$ in that neighbourhood. Therefore the same happens to the sequences $Df_n(z)$, $J(z, f_n)$ and $\K(z, f_n)$, and so their limits are precisely $D\bar{f}(z)$, $J(z, \bar{f})$ and $\K(z, \bar{f})$. Especially, in order to have $D\bar{f}\in L^1_{loc}$ it suffices that
\begin{equation}\label{dfinl1general}
\sum_{n=1}^\infty |A_n|\,\alpha_n<+\infty
\end{equation}
In case this holds true, then $\bar{f}$ is a homeomorphism in $W^{1,1}_{loc}$, and as a consequence its jacobian determinant $J(\cdot, \bar{f})\in L^1_{loc}$. Moreover, in order to have $\K(\cdot, \bar{f})\in L^p_{loc}$ one needs to require that
\begin{equation}\label{kfinlpgeneral}
\sum_{n=1}^\infty |A_n|\,\frac{\alpha_n^{2p}}{q_n^p}<\infty
\end{equation}
Again, as it was the case for $f$, \eqref{kfinlpgeneral} implies \eqref{dfinl1general} when $q_n^\frac{p}{2p-1}< \alpha_n$ and so our parameters $\alpha_n$ and $q_n$ need to be chosen according to \eqref{kfinlpgeneral} as well as the purpose of $\bar{f}$ to be optimal for Theorem \ref{main}. For this, again as before, we have $\bar{f}(0)=0$, $\bar{f}(1)=1$ and
$$ 
\left|\arg\left(\bar{f}(r_n)\right)\right|\geq\left|\arg\left(\left(\frac{1}{e}\right)^{q_n+i\alpha_n}\right)\right|=|\alpha_n|
$$
which motivates us to choose 
$$
\alpha_n= h(r_n)\, \left(\log\frac{1}{r_n}\right)^{1/2}\,r_n^{-\frac{1}{p}}\hspace{2cm}q_n=\log\frac1{r_n},
$$
where $h$ is any gauge function such that $h(r)\to0$ as $r\to0$ and the condition $q_n^\frac{p}{2p-1}< \alpha_n$ is satisfied. Indeed, with these choices \eqref{kfinlpgeneral} becomes
$$\sum_nh(r_n)^{2p}<\infty$$
which, as before, may always be granted by choosing smaller $r_n$, if needed. Having \eqref{kfinlpgeneral} fulfilled, our map $\bar{f}$ is a mapping of finite distortion with $\K(\cdot, \bar{f})\in L^p_{loc}$. Also, the resulting map $\bar{f}$ attains the optimal rotational behavior stated at Theorem \ref{main} modulo the gauge function $h$ which can be chosen to converge to $0$ as slowly as desired. \\
\\
Therefore, Theorem \ref{main2} will be proven if we are able to show that $\bar{f}$ is H\"older from below. Furthermore, we also show that $\bar{f}$  is H\"older from above, highlighting regularity of our mappings.   \\
\\
 To do this, we first observe that the composition of $z\mapsto ze^{i\alpha\log|z|}$ followed by $z\mapsto z|z|^{q-1}$ is precisely $z\mapsto z|z|^{q-1}e^{i\alpha\log|z|}$. This observation suggests us to decompose $\bar{f}=g\circ f$, where $f$ is essentially the first example in this section (with different choice of $\alpha_n$) and $g$ is constructed by building blocks \eqref{buildingblock} with $\alpha=0$ at each step. Morally, $f$ leaves fixed all circles centered at $0$, and only rotates the annuli $A_n$, while $g$ conveniently stretches each $A_n$.   \\ \\
For any $p>1$, the bi-H\"older nature of $f$ has already been proven when $\alpha_n=h(r_n)\,r_n^{-1/p}$. Hence we can directly use the same proof there after we estimate $$ h(r_n)\, \left(\log\frac{1}{r_n}\right)^{1/2}\,r_n^{-\frac{1}{p}} \leq h(r_n)\,r_n^{-1/(p- \epsilon)}$$ for all small $r_n$ and $\epsilon = (p-1)/2$. Therefore, it only remains to show that $g$ is bi-H\"older as well. To this end, we first show that $g$ is H\"older from above using the fundamental theorem of calculus.\\
\\
Let $x,y\in B(0,1)$ be given. Without loss of generality let us assume that $|y|\geq|x|$ and let $w$ be the point for which $|w|=|x|$ and $\arg(w)=\arg(y)$. Now  
\begin{equation} \label{H-A}
|g(x)-g(y)| \leq |g(w)-g(x)|+ |g(y)-g(w)|,
\end{equation}
and we will show that both of these are H\"older.  First, since $g$ maps circles centered at the origin radially to similar circles  with equal or smaller radius (as $q_n\geq 1$)  it is clear that 
\begin{equation*}
|g(x)-g(w)| \leq |x-w| \leq |x-y|.
\end{equation*}
Let us then concentrate of the second part. First we note, that we can without loss of generality assume that $y$ and $w$ are real numbers as $g$ is a radial mapping. From our construction we see that the line segments $[r_n, R_n]$, $(R_{n+1},r_n)$ and $(R_1,1]$ partition the line segment $(0,1]$. Furthermore, from  \eqref{buildingblock} it is clear that the differential is bounded from above by $1$ in the segments $(R_{n+1},r_n)$ and $(R_1,1]$. On the other hand, in segments  $[r_n, R_n]$ we can estimate 
\begin{equation*}
|g'(t)| \leq \log\left( \frac{1}{r_n} \right)  \leq \frac{C}{\sqrt{t}} 
\end{equation*}
for any $t \in [r_n, R_n]$ with fixed $C$ that does not depend on $n$ or $t$. This is so because of our choice of $q_n$. Combining these two estimates we have 
\begin{equation*}
|g'(t)| \leq \frac{C}{\sqrt{t}} 
\end{equation*}
for any $t \in (0,1)$. Thus we can use fundamental theorem of calculus to estimate 
$$\aligned
|g(y)-g(w)|&=\int_{w}^{y} |g'(t)|dt\\
&\leq\int_{w}^{y} \frac{C}{\sqrt{t}}  dt\\
&=2C\left(\sqrt{y} - \sqrt{w}\right)\\
&\leq 2C \sqrt{y-w}.
\endaligned$$ 
This proves that also the second part in \eqref{H-A} is H\"older, and thus we obtain 
\begin{equation*}
|g(y)-g(x)| \leq |g(y)-g(w)|+ |g(w)-g(x)| \leq C\sqrt{|y-w|} + \sqrt{|x-w|} \leq 2C \sqrt{|x-y|},
\end{equation*}
which shows $g$ is H\"older from above. \\ \\
Let us next prove that $g$ is H\"older from below. To this end, given any two points $x,y\in B(0,1)$ we again without loss of generality assume that $|y|\geq|x|$ and let $w$ be the point for which $|w|=|x|$ and $\arg(w)=\arg(y)$. Now, as $g$ is a radial homeomorphism, it follows that
$$\aligned
|g(x)-g(y)|\geq\max\{|g(x)-g(w)|,|g(y)-g(w)|\}
\endaligned$$
Moreover, 
$$\aligned
\max\{|x-w|,|y-w|\}\geq\frac{1}{2}|x-y|
\endaligned$$
Therefore, it is enough to show that both $|g(x)-g(w)|$ and $|g(y)-g(w)|$ satisfy H\"older bounds from below. Note that if $x=0$ then clearly $w=0$ and we have only the radial part $|g(y)-g(w)|$. \\ \\ 
Let us first check the term $|g(x)-g(w)|$. Since $g$ maps radially circles centered at the origin to similar circles we see that $|g(x)-g(w)|$ gets contracted the same amount as the modulus $|g(w)|$ is contracted under $g$. Now we must consider two possibilities, either $x,w \in A_n$ or $x,w \in D_n$ for some $n$. Let us first assume $x,w\in A_n=B(0,R_n)\setminus B(0,r_n)$ for some $n$. Here we impose an additional assumption that
\begin{equation}\label{Additional assumption Lauri}
r_n<\left(\frac{1}{e}\right)^{q_{n-1}+q_{n-2}+...+q_{1}-(n-1)},
\end{equation}
which we can do as  the radii  $r_n$ can be assumed to decrease as fast as we want. Then we can estimate
$$\aligned
|g(x)|&=\left(\frac{1}{e}\right)^{q_{n-1}+q_{n-2}+...+q_1-(n-1)}\cdot |x|\,\left(\frac{|x|}{R_n}\right)^{q_n-1}\\
      &\geq r_n \cdot |x|\,\left(\frac{|x|}{R_n}\right)^{q_n-1}\\
      &\geq r_n \cdot |x|\,\left(\frac1e\right)^{q_n-1} =e\cdot r_{n}^{2}\cdot|x|
\endaligned$$ for any $x\in A_n$. Therefore,
$$\aligned
|g(x)-g(w)|\geq e\cdot r_{n}^{2}\cdot|x-w|\\
           \geq C\cdot|x-w|^3
\endaligned$$
since $|x-w|<C\cdot r_n$ for some fixed constant $C>0$ when $x,w \in A_n$. \\
\\
Next, let $x,w\in D_n=B(0,r_n)\setminus B(0,R_{n+1})$ for some $n$.  Using \eqref{Additional assumption Lauri} we get
$$\aligned
|g(x)|&\geq c\left(\frac{1}{e}\right)^{q_{n-1}+q_{n-2}+...+q_{1}-(n-1)}\cdot r_n\cdot|x|\\
      &\geq c\cdot r_{n}^{2}\cdot|x|.
\endaligned$$
Thus we can use a similar argument as in the previous case to estimate
$$\aligned
|g(x)-g(w)|&\geq c\cdot r_{n}^{2}\cdot|x-w|\\
           &\geq c\cdot|x-w|^3\\
\endaligned$$since $|x-w|<c\cdot r_n$ for some fixed constant $c>0$ when $x,w\in D_n$.\\  
\\
Since the set $\D\setminus\{0\}$ is partitioned by separated annuli $A_n$ and $D_n$ we have thus proven that $|g(x)-g(w)|$ satisfies H\"older estimates from below. \\
\\
Finally, let us prove the H\"older estimates from below for the term $|g(y)-g(w)|$. As the mapping $g$ is radial, we can again  assume that $y$ and $w$ are real. We aim to use again the Fundamental Theorem of Calculus, and thus have to estimate the differential from below.  Using \eqref{Additional assumption Lauri}, as well as the fact that $q_n>1$, we can estimate for any real number $t\in [r_n,R_n]$ that
$$\aligned
g'(t)&=\left(\frac{1}{e}\right)^{q_{n-1}+q_{n-2}+...+q_1-(n-1)}\cdot   q_n\cdot \left(\frac{t}{R_n}\right)^{q_{n}-1}\\
     &\geq r_n\, q_n\cdot \left(\frac{r_n}{R_n}\right)^{q_{n}-1}\\    
      &= e\,q_n\,r_n^2 \\
      &\geq c\cdot t^2\,\log\frac1t.
\endaligned$$ 
Next, if $t\in [R_{n+1},r_n]$, we have
$$\aligned
g'(t) &=\left(\frac{1}{e}\right)^{q_{n-1}+q_{n-2}+...+q_{1}-(n-1)}\cdot\left(\frac{1}{e}\right)^{q_{n}-1}\\
&\geq e\cdot r_{n}^{2}\\
&\geq c\cdot t^2
\endaligned$$
Thus, as before, since $(0,1)$ is partitioned by the intervals $[r_n,R_n]$, $[R_{n+1},r_n]$ and $[R_1,1)$, we end up getting that   
$$\aligned
g'(t)\geq c\cdot t^2
\endaligned$$ for every $t\in(0,1)$. Now, we use the fundamental theorem of calculus to get
$$\aligned
|g(y)-g(w)|&=\int_{w}^{y}g'(t)dt\\
           &\geq\int_{w}^{y}c\cdot t^{2}dt\\
           &=C\left(y^{3}-w^{3}\right)\\
           &\geq C|y-w|^3
\endaligned$$ This proves that the second term is H\"older from below as well, which in turn proves that $g$ is H\"older from below. This finishes the proof of Theorem \ref{main2}.

 \noindent
 Albert Clop\\
 Department of Mathematics and Computer Science\\
 Universitat de Barcelona\\
 08007-Barcelona\\
 CATALONIA\\
 albert.clop@ub.edu\\
 \\
 \\
 Lauri Hitruhin\\
 Department of Mathematics and Systems Analysis\\
 Aalto University\\
 P.O. Box 11100 FI-00076 Aalto\\
 Helsinki, Finland\\
 lauri.hitruhin@aalto.fi\\
 \\
 \\
 Banhirup Sengupta\\
 Departament de Matem\`atiques\\
 Universitat Aut\`onoma de Barcelona\\
 08193-Bellaterra\\
 CATALONIA\\
 sengupta@mat.uab.cat\\
 
\end{document}